\newtheorem{theorem}{Theorem}[section]
\newtheorem{corollary}[theorem]{Corollary}
\newtheorem{lemma}[theorem]{Lemma}
\newtheorem{proposition}[theorem]{Proposition}
\numberwithin{equation}{section}
\def\C{{\mathbb C}}
\def\N{{\mathbb N}}
\def\R{{\mathbb R}}
\def\E{{\mathbb E}}
\def\eps{\varepsilon}
\newcommand{\abs}[1]{\left\lvert#1\right\rvert}
\begin{document}

\begin{frontmatter}
\title{Expected number of real zeros for random Freud orthogonal polynomials}

\author[myaddress]{Igor E. Pritsker\corref{mycorrespondingauthor}}
\cortext[mycorrespondingauthor]{Corresponding author}
\ead{igor@math.okstate.edu}

\author[myaddress]{Xiaoju Xie}
\ead{sophia.xie@okstate.edu}

\address[myaddress]{Department of Mathematics, Oklahoma State University, Stillwater, OK 74078, USA}


\begin{abstract}
We study the expected number of real zeros for random linear combinations of orthogonal polynomials. It is well known that Kac polynomials, spanned by monomials with i.i.d. Gaussian coefficients, have only  $(2/\pi + o(1))\log{n}$ expected real zeros in terms of the degree $n$. On the other hand, if the basis is given by orthonormal polynomials associated to a finite Borel measure with compact support on the real line, then random linear combinations have $n/\sqrt{3} + o(n)$ expected real zeros under mild conditions. We prove that the latter asymptotic relation holds for all random orthogonal polynomials on the real line associated with Freud weights, and give local results on the expected number of real zeros. We also show that the counting measures of properly scaled zeros of random Freud polynomials converge weakly to the Ullman distribution.
\end{abstract}

\begin{keyword}
Polynomials, random coefficients, expected number of real zeros, random orthogonal polynomials, Freud weights.
\end{keyword}

\end{frontmatter}

\section{Background}

Problems on the number of real zeros for polynomials with random coefficients date back to 1930s, and they are considered as some of the most classical in the area of random polynomials. These original contributions dealt with the expected number of real zeros $\E[N_n(\R)]$ for polynomials of the form $P_n(x)=\sum_{k=0}^{n} c_k x^k,$ where $\{c_k\}_{k=0}^n$ are independent and identically distributed random variables. Apparently the first paper that initiated the study is due to Bloch and P\'olya \cite{BP}, who gave an upper bound $\E[N_n(\R)] = O(\sqrt{n})$ for polynomials with coefficients selected from the set $\{-1,0,1\}$ with equal probabilities. Further results generalizing and improving that estimate were obtained by Littlewood and Offord \cite{LO1}-\cite{LO2}, Erd\H{o}s and Offord \cite{EO} and others. In particular, Kac \cite{Ka1} established the important asymptotic result
\[
\E[N_n(\R)] = (2/\pi + o(1))\log{n}\quad\mbox{as }n\to\infty,
\]
for polynomials with independent real Gaussian coefficients. More precise forms of this asymptotic were obtained by many authors, including Kac \cite{Ka2}, Wang \cite{Wa}, Edelman and Kostlan \cite{EK}. It appears that the sharpest known version is given by the asymptotic series of Wilkins \cite{Wi1}. Many additional references and further directions of work on the expected number of real zeros may be found in the books of Bharucha-Reid and Sambandham \cite{BRS}, and of Farahmand \cite{Fa}. In fact, Kac \cite{Ka1}-\cite{Ka2} found the exact formula for $\E[N_n(\R)]$ in the case of standard real independent Gaussian coefficients:
\[
\E[N_n(\R)]=\frac{4}{\pi}\displaystyle \int_0^1 \frac{\sqrt{A(x)C(x)-B^2(x)}}{A(x)} \, dx,
\]
where
\[
A(x)=\sum_{j=0}^n x^{2j},\quad
B(x)=\sum_{j=1}^n j x^{2j-1}\quad\mbox{and}\quad
C(x)=\sum_{j=1}^n j^2 x^{2j-2}.
\]
In the subsequent paper Kac \cite{Ka3}, the asymptotic result for the number of real zeros was extended to the case of uniformly distributed coefficients on $[-1, 1]$. Erd\H{o}s and Offord \cite{EO} generalized the Kac asymptotic to Bernoulli distribution (uniform on $\{-1,1\}$), while Stevens \cite{St} considered a wide class of distributions. Finally, Ibragimov and Maslova \cite{IM1,IM2} extended the result to all mean-zero distributions in the domain of attraction of the normal law.

We state a result on the number of real zeros for the random linear combinations of rather general functions. It originated in the papers of Kac \cite{Ka1}-\cite{Ka3}, who used the monomial basis, and was extended to trigonometric polynomials and other bases, see Farahmand \cite{Fa} and Das \cite{Da1}-\cite{Da2}. We are particularly interested in the bases of orthonormal polynomials, which is the case considered by Das \cite{Da1}. For any set $E\subset\C$, we use the notation $N_n(E)$ for the number of zeros of random functions \eqref{1.1} (or random orthogonal polynomials of degree at most $n$) located in $E$. The expected number of zeros in $E$ is denoted by $\E[N_n(E)],$ with $\E[N_n(a,b)]$ being the expected number of zeros in $(a,b)\subset\R.$

\begin{proposition} \label{prop1.1}
Let $[a,b]\subset\R$, and consider real valued functions $g_{j}(x)\in C^1([a,b]), \ j=0,\ldots,n,$ with $g_{0}(x)$ being a nonzero constant. Define the random function
\begin{equation} \label{1.1}
G_n(x)=\sum_{j=0}^{n} c_{j}g_{j}(x),
\end{equation}
where the coefficients $c_j$ are i.i.d. random variables with Gaussian distribution $\mathcal{N}(0, \sigma^2), \sigma > 0$. If there is $M\in\N$ such that $G_n'(x)$ has at most $M$ zeros in $(a,b)$ for all choices of coefficients, then the expected number of real zeros of $G_n(x)$ in the interval $(a,b)$ is given by
\begin{equation} \label{1.2}
\E[N_n(a,b)]=\frac{1}{\pi} \int_a^b \frac{\sqrt{A(x)C(x)-B^2(x)}}{A(x)} \, dx,
\end{equation}
where
\begin{align} \label{1.3}
A(x)=\sum_{j=0}^n g_j^{2}(x), \quad B(x)=\sum_{j=1}^n g_j(x)g_j'(x) \quad \mbox{and}\quad C(x)=\sum_{j=1}^n [g_j'(x)]^2.
\end{align}
\end{proposition}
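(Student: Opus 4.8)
The plan is to apply the Kac--Rice method. The key structural observation is that, for each fixed $x\in(a,b)$, the pair $(G_n(x),G_n'(x))$ is a centered two-dimensional Gaussian vector, since it is a linear image of the i.i.d.\ Gaussian coefficients $(c_0,\dots,c_n)$. I would begin by computing its covariance matrix: using $c_j\sim\mathcal N(0,\sigma^2)$ together with $g_0'\equiv 0$, a direct calculation gives
\begin{equation*}
\Sigma(x)=\sigma^2\begin{pmatrix} A(x) & B(x)\\ B(x) & C(x)\end{pmatrix},
\qquad \det\Sigma(x)=\sigma^4\bigl(A(x)C(x)-B^2(x)\bigr),
\end{equation*}
with $A,B,C$ as in \eqref{1.3}. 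Two facts make the statement well posed: because $g_0$ is a nonzero constant we have $A(x)\ge g_0^2>0$ on $[a,b]$, and the Cauchy--Schwarz inequality applied to $(g_j(x))_{j}$ and $(g_j'(x))_{j}$ yields $A(x)C(x)-B^2(x)\ge 0$, so the integrand in \eqref{1.2} is real-valued with a non-vanishing denominator.

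For the counting step I would use the area formula (Banach indicatrix)
\begin{equation*}
\int_a^b \mathbf{1}_{[-\eps,\eps]}\bigl(G_n(x)\bigr)\,\abs{G_n'(x)}\,dx
= \int_{-\eps}^{\eps} \#\{x\in(a,b): G_n(x)=u\}\,du .
\end{equation*}
Here the hypothesis on $G_n'$ enters decisively: since $G_n(\cdot)-u$ has derivative $G_n'$, which by assumption has at most $M$ zeros, Rolle's theorem gives $\#\{x\in(a,b):G_n(x)=u\}\le M+1$ for every $u$ and every realization. Consequently the normalized quantity $N_n^\eps\defeq\frac{1}{2\eps}\int_a^b \mathbf{1}_{[-\eps,\eps]}(G_n(x))\abs{G_n'(x)}\,dx$ is bounded, $0\le N_n^\eps\le M+1$, uniformly in $\eps>0$, while for almost every realization (those with no multiple zero of $G_n$) one has $N_n^\eps\to N_n(a,b)$ as $\eps\to0^+$. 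Bounded convergence then yields $\E[N_n(a,b)]=\lim_{\eps\to0^+}\E[N_n^\eps]$, and Tonelli's theorem lets me move the expectation inside the $x$-integral.

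It then remains to evaluate, for each fixed $x$, the local limit $\lim_{\eps\to0^+}\frac{1}{2\eps}\E\bigl[\mathbf{1}_{[-\eps,\eps]}(G_n(x))\abs{G_n'(x)}\bigr]=\int_{-\infty}^{\infty}\abs{v}\,p_x(0,v)\,dv$, where $p_x$ is the joint Gaussian density of $(G_n(x),G_n'(x))$. Using $\Sigma(x)^{-1}$ I would reduce $p_x(0,v)$ to a centered one-dimensional Gaussian in $v$ times a constant,
\begin{equation*}
p_x(0,v)=\frac{1}{2\pi\sigma^2\sqrt{AC-B^2}}\exp\!\left(-\frac{A\,v^2}{2\sigma^2(AC-B^2)}\right),
\end{equation*}
and the elementary identity $\int_{-\infty}^{\infty}\abs{v}\,e^{-\alpha v^2}\,dv=1/\alpha$ with $\alpha=A/\bigl(2\sigma^2(AC-B^2)\bigr)$ collapses the integral to the density $\sqrt{A(x)C(x)-B^2(x)}\big/\bigl(\pi A(x)\bigr)$. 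Note that $\sigma$ cancels, as it must, since rescaling all coefficients leaves the zero set unchanged. Integrating over $(a,b)$ produces \eqref{1.2}.

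I expect the main obstacle to lie not in the Gaussian algebra but in the analytic justification of the two limits above. Specifically, one must verify that almost surely $G_n$ has no multiple zero in $(a,b)$ --- equivalently that $G_n$ and $G_n'$ do not vanish simultaneously --- so that $N_n^\eps\to N_n(a,b)$ pointwise; this follows because, at a fixed $x$ where $\det\Sigma(x)>0$, the event $\{G_n(x)=G_n'(x)=0\}$ is null, and the exceptional set is negligible. The uniform bound furnished by the $M$-zero hypothesis is exactly what upgrades this pointwise convergence to convergence of expectations via bounded convergence, and it is worth isolating this as the step that uses the full strength of the assumption.
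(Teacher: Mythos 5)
The paper does not actually prove Proposition \ref{prop1.1} --- it defers to a sketch in \cite{LPX} --- and your Kac--Rice derivation (joint Gaussian law of $(G_n(x),G_n'(x))$ with covariance $\sigma^2\bigl(\begin{smallmatrix}A&B\\B&C\end{smallmatrix}\bigr)$, the Banach indicatrix combined with the Rolle bound $\#\{G_n=u\}\le M+1$, bounded convergence, and the evaluation $\int|v|\,p_x(0,v)\,dv=\sqrt{AC-B^2}/(\pi A)$) is exactly the standard route that sketch follows, and the Gaussian algebra is correct. The one step stated too quickly is the almost-sure absence of multiple zeros: that $\{G_n(x)=G_n'(x)=0\}$ is null for each fixed $x$ with $\det\Sigma(x)>0$ does not by itself make the union over uncountably many $x$ null (one needs a Bulinskaya-type argument, and the points where $A(x)C(x)-B^2(x)=0$ require separate treatment), but you correctly flag this analytic justification as the part of the proof requiring care.
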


Clearly, the original formula of Kac follows from this proposition for $g_j(x)=x^j,\ j=0,1,\ldots,n.$ For a sketch of the proof of Proposition \ref{prop1.1}, see \cite{LPX}. We note that multiple zeros are counted only once by the standard convention in all of the above results on real zeros. However, the probability of having a multiple zero for a polynomial with Gaussian coefficients is equal to 0, so that we have the same result on the expected number of zeros regardless whether they are counted with or without multiplicities.

\section{Random orthogonal polynomials}
In this paper, we consider the Freud weights
\[
W(x)=e^{-c\abs{x}^\lambda}, \, x \in \R,
\]
where $c>0$ and $\lambda>1$ are constants \cite{Fr}. For $n\geq 0$, let
\begin{equation*}
p_{n}\left( x \right) =p_{n}\left( W^2, x \right) =\gamma _{n}x^{n}+...
\end{equation*}%
denote the $n$th orthonormal polynomial with $\gamma_n>0,$ so that
\begin{equation*}
\int p_{n}p_{m}W^2 =\delta _{mn}.
\end{equation*}%
Using the orthonormal polynomials $\{p_j\}_{j=0}^{\infty}$ as the basis, we consider the ensemble of random polynomials of the form
\begin{equation} \label{2.1}
P_n(x)=\sum_{j=0}^{n}c_{j}p_{j}(x),\quad n\in\N,
\end{equation}
where the coefficients $c_0,c_1,\ldots,c_n$ are i.i.d. random variables. Such a family is often called random orthogonal polynomials. If the coefficients have Gaussian distribution, one can apply Proposition \ref{prop1.1} to study the expected number of real zeros of random orthogonal polynomials. In particular, Das \cite{Da1} considered random Legendre polynomials, and found that $\E[N_n(-1,1)]$ is asymptotically equal to $n/\sqrt{3}$. Wilkins \cite{Wi2} improved the error term in this asymptotic relation by showing that $\E[N_n(-1,1)] = n/\sqrt{3} + o(n^\eps)$ for any $\eps>0.$ Related results were obtained by Farahmand \cite{Fa1,Fa2}. For random Jacobi polynomials, Das and Bhatt \cite{DB} concluded that $\E[N_n(-1,1)]$ is asymptotically equal to $n/\sqrt{3}$ too. They also stated estimates for the expected number of real zeros of random Hermite and Laguerre polynomials, but those arguments contain significant gaps. The results of this paper provide detailed information on the expected number of real zeros for random polynomials spanned by the Freud orthogonal polynomials. In particular, they cover the case of random Hermite polynomials. Lubinsky and the authors also recently showed \cite{LPX} that if the basis is given by orthonormal polynomials associated with a finite Borel measure compactly supported on the real line, then random linear combinations have $n/\sqrt{3} + o(n)$ expected real zeros under some mild conditions on the weight. Interesting computations and pictures of zero distributions of random orthogonal polynomials may be found on the \textsc{chebfun} web page of Trefethen \cite{Tr}.

For the orthonormal polynomials $\{p_j(x)\}_{j=0}^{\infty}$, define the reproducing kernel by
\[
K_n(x,y)=\sum_{j=0}^{n-1}p_j(x)p_j(y),
\]
and the differentiated kernels by
\[
K_n^{(k,l)}(x,y)=\sum_{j=0}^{n-1}p_j^{(k)}(x)p_j^{(l)}(y),\quad k,l\in\N\cup\{0\}.
\]
The strategy is to apply Proposition \ref{prop1.1} with $g_j=p_j$, so that
\begin{align} \label{2.2}
A(x) = K_{n+1}(x,x), \quad B(x) = K_{n+1}^{(0,1)}(x,x) \quad \mbox{and}\quad C(x) = K_{n+1}^{(1,1)}(x,x).
\end{align}
We use universality limits for the reproducing kernels of orthogonal polynomials (see Levin and Lubinsky \cite{LL2}-\cite{LL3}), and asymptotic results on zeros of random polynomials (cf. Pritsker \cite{Pr}) to give asymptotics for the expected number of real zeros for a class of random orthogonal polynomials associated with Freud weights.

\begin{theorem} \label{thm2.1}
Let $W(x)=e^{-c\abs{x}^\lambda}$ be a Freud weight on $\R$, where $c>0$ and $\lambda>1$ are constants. Then the expected number of real zeros of random orthogonal polynomials \eqref{2.1} with Gaussian coefficients satisfy
\begin{align} \label{2.3}
\lim_{n\to\infty} \frac{1}{n} \E[N_n(\R)]= \frac{1}{\sqrt{3}}.
\end{align}
\end{theorem}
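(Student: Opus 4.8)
The plan is to start from Proposition \ref{prop1.1}, which applies for each fixed $n$ since $P_n'$ is a polynomial of degree at most $n-1$ and hence has at most $M=n-1$ real zeros for every choice of coefficients. With $g_j=p_j$ and the identifications \eqref{2.2}, this reduces the theorem to an analysis of the expected density of real zeros
\[
\rho_n(x)=\frac{1}{\pi}\,\frac{\sqrt{A(x)C(x)-B^2(x)}}{A(x)},\qquad A=K_{n+1}(x,x),\ B=K_{n+1}^{(0,1)}(x,x),\ C=K_{n+1}^{(1,1)}(x,x),
\]
so that $\E[N_n(\R)]=\int_\R\rho_n(x)\,dx$, taken as an improper integral and justified by monotone convergence from finite intervals. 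The goal becomes $\tfrac1n\int_\R\rho_n\to 1/\sqrt3$.

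First I would isolate the purely local content of the density. A direct computation gives the identity
\[
\frac{A(x)C(x)-B^2(x)}{A(x)^2}=\partial_x\partial_y\log K_{n+1}(x,y)\Big|_{y=x},
\]
and since $\log\bigl(W(x)W(y)K_{n+1}(x,y)\bigr)$ differs from $\log K_{n+1}(x,y)$ only by the additive terms $\log W(x)+\log W(y)$, whose mixed second derivative vanishes, this quantity is unchanged if $K_{n+1}$ is replaced by the weighted kernel $\widetilde K_{n+1}(x,y)=W(x)W(y)K_{n+1}(x,y)$. This is the crucial reduction, because the Levin--Lubinsky universality results \cite{LL2}-\cite{LL3} for Freud weights state that, for $x_0$ in the bulk, the rescaled weighted kernel $s^{-1}\widetilde K_{n+1}(x_0+a/s,\,x_0+b/s)$, with $s=\widetilde K_{n+1}(x_0,x_0)$, converges to the sine kernel $\tfrac{\sin\pi(a-b)}{\pi(a-b)}$. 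Feeding this into the identity and using $\partial_a\partial_b\log\tfrac{\sin\pi(a-b)}{\pi(a-b)}\big|_{(0,0)}=\pi^2/3$ yields
\[
\rho_n(x_0)=\frac{1}{\sqrt3}\,\widetilde K_{n+1}(x_0,x_0)\bigl(1+o(1)\bigr)=\frac{1}{\sqrt3}\,W^2(x_0)K_{n+1}(x_0,x_0)\bigl(1+o(1)\bigr)
\]
on compact subsets of the bulk. One point to treat with care here is that the argument needs convergence of the mixed second derivative of the kernel, not merely of the kernel itself; since the rescaled kernels are entire in the local variables, this follows from the sine-kernel convergence by Cauchy's estimates.

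The global asymptotic then rests on the exact identity $\int_\R W^2(x)K_{n+1}(x,x)\,dx=\sum_{j=0}^n\int p_j^2 W^2=n+1$. Rescaling by the Mhaskar--Rakhmanov--Saff numbers $a_n$ (with $a_n\asymp n^{1/\lambda}$) and invoking the convergence of the scaled zero counting measures to the Ullman distribution $\mu_\lambda$, whose density vanishes like $\sqrt{1-\abs t}$ at the endpoints $t=\pm1$, I would show that for each fixed $\beta\in(0,1)$
\[
\frac1n\int_{-a_n\beta}^{a_n\beta}\rho_n(x)\,dx\longrightarrow \frac{1}{\sqrt3}\,\mu_\lambda\bigl((-\beta,\beta)\bigr),
\]
and that the right-hand side tends to $1/\sqrt3$ as $\beta\to1^-$.

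The main obstacle is the remaining estimate on the near-edge and far-tail regions, i.e. showing that
\[
\limsup_{\beta\to1^-}\ \limsup_{n\to\infty}\ \frac1n\int_{\abs x\ge a_n\beta}\rho_n(x)\,dx=0.
\]
This is where the sine-kernel picture breaks down, owing to Airy-type behavior at the soft edge $\pm a_n$, and where uniform control is hardest to obtain. Here I would combine the pointwise bound $\rho_n(x)\le\tfrac1\pi\sqrt{C(x)/A(x)}$ with Markov--Bernstein inequalities and the sharp bounds of Levin and Lubinsky on Freud orthonormal polynomials and their derivatives: past $a_n(1+\delta)$ the infinite--finite range inequalities force rapid decay of $\rho_n$, giving a contribution that is $o(n)$, while on the annulus $a_n\beta\le\abs x\le a_n(1+\delta)$ the contribution is dominated by the $\mu_\lambda$-mass near the edge, which is small for $\beta$ near $1$. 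Combining the bulk limit with this tail estimate and letting $\beta\to1^-$ yields $\tfrac1n\E[N_n(\R)]\to1/\sqrt3$.
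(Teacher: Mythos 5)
Your bulk analysis is essentially the paper's: the paper also applies Proposition \ref{prop1.1} with $g_j=p_j$, and its Theorem \ref{thm2.2} obtains $\rho_n(x)=\frac{1}{\sqrt3}\sigma_{n+1}(x)(1+o(1))$ uniformly on $J_{n+1}(\eps)=(1-\eps)[-a_{n+1},a_{n+1}]$ from Theorem 1.6 of \cite{LL2}, which already supplies the asymptotics of $W^2K_{n+1}^{(r,s)}(x,x)$ for $r,s\le 1$ (so your detour through the log-derivative identity, the sine kernel and Cauchy estimates, while correct, is not needed). The paper then identifies the limit via the exact identity $\sigma_{n}^*\,ds=d\mu_w$ (Lemma \ref{lem3.1}) rather than via the trace identity $\int W^2K_{n+1}(x,x)\,dx=n+1$; both give $\frac1n\E[N_n^*([a,b])]\to\frac{1}{\sqrt3}\mu_w([a,b])$ for $[a,b]\subset(-1,1)$.

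The genuine gap is exactly where you flag ``the main obstacle'': the near-edge and far-tail contribution. Your proposed pointwise bound $\rho_n(x)\le\frac1\pi\sqrt{C(x)/A(x)}$ cannot close the far-tail estimate, because for $|x|$ large $A(x)\sim\gamma_n^2x^{2n}$ and $C(x)\sim n^2\gamma_n^2x^{2n-2}$, so $\sqrt{C/A}\sim n/|x|$ and $\int_{a_n(1+\delta)}^{\infty}\sqrt{C/A}\,dx$ diverges; the actual integrability of $\rho_n$ at infinity comes entirely from the cancellation in $AC-B^2$, which your bound discards. On the annulus $a_n\beta\le|x|\le a_n(1+\delta)$ you would additionally need a uniform estimate $\sqrt{C/A}=O(n/a_n)$ through the Airy edge, which you assert but do not derive. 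The paper avoids all of this analysis: it proves Theorem \ref{thm2.3} (the monic normalizations of $P_n^*$ are asymptotically extremal with respect to $w$ with probability one, hence the counting measures of \emph{all} $n$ complex zeros converge weakly to $\mu_w$), deduces Corollary \ref{cor2.4} by dominated convergence, and then concludes $\frac1n\E[N_n^*(\R\setminus(-1,1))]\to\mu_w(\R\setminus(-1,1))=0$ because every real zero is a complex zero and asymptotically all $n$ of them concentrate on $[-1,1]$. If you want to keep your analytic route, you must either prove a tail bound that retains the $AC-B^2$ cancellation (e.g.\ via the weighted kernel $\widetilde K$ and restricted-range/Christoffel-function estimates applied to $\partial_x\partial_y\log\widetilde K$ rather than to $C/A$), or replace that step by the global zero-distribution argument as the paper does.
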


We note that asymptotic relation \eqref{2.3} is new even in the classical case of Hermite weight $W(x)=e^{-\frac{1}{2}x^2}$. Theorem \ref{thm2.1} is a combination of two results on zeros of random orthogonal polynomials given below. Define the constant
\begin{align} \label{2.4}
\gamma_\lambda=\displaystyle \frac{ \Gamma(\frac{1}{2})\Gamma(\frac{\lambda}{2}) }{2\Gamma(\frac{\lambda+1}{2})},
\end{align}
and the contracted version of $P_n$:
\begin{align} \label{2.5}
P_n^*(s):=P_n(a_ns),\quad n\in\N,
\end{align}
where $a_n=\gamma_\lambda^{\frac{1}{\lambda}}c^{-\frac{1}{\lambda}}n^{\frac{1}{\lambda}}$ is a positive number.

For any set $E\subset\C$, we use the notation $N_n^*(E)$ for the number of zeros of random functions $P_n^*(s)$ located in $E$. The expected number of zeros of $P_n^*(s)$ in $E$ is denoted by $\E[N_n^*(E)],$ with $\E[N_n^*\left([a,b]\right)]$ being the expected number of zeros in $[a,b]\subset\R.$

\begin{theorem} \label{thm2.2}
Let $W(x)=e^{-c\abs{x}^\lambda}$ be a Freud weight on $\R$, where $c>0$ and $\lambda>1$ are constants. If $[a,b] \subset (-1,1)$ is any closed interval, then
\begin{equation} \label{2.6}
\lim_{n\rightarrow\infty} \frac{1}{n}\E\left[N_n^*\left([a,b]\right)\right] = \frac{1}{\sqrt{3}} \mu_{w}([a,b]),
\end{equation}
where the measure
$\mu_{w}$ is given by
\[
d\mu_{w}(s)=\left( \frac{\lambda}{\pi}\int_{\abs{s}}^1 \frac{y^{\lambda-1}}{\sqrt{y^2-s^2}}\, dy  \right)ds,\  s\in [-1, 1].
\]
\end{theorem}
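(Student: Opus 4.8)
The plan is to apply Proposition \ref{prop1.1} to the contracted polynomial $P_n^*$ from \eqref{2.5} and then extract the limit of the resulting Kac--Rice integral from the bulk universality of the reproducing kernels, in close analogy with the compactly supported case treated in \cite{LPX}. First I would observe that $(P_n^*)'$ is a polynomial of degree at most $n-1$, hence has at most $n-1$ real zeros for every choice of coefficients; so the hypothesis of Proposition \ref{prop1.1} holds with $M=n-1$ and formula \eqref{1.2} applies on $[a,b]$. Writing $g_j(s)=p_j(a_ns)$ and combining the chain rule with \eqref{2.2}, the densities \eqref{1.3} for $P_n^*$ become $A^*(s)=K_{n+1}(a_ns,a_ns)$, $B^*(s)=a_nK_{n+1}^{(0,1)}(a_ns,a_ns)$ and $C^*(s)=a_n^2K_{n+1}^{(1,1)}(a_ns,a_ns)$, so that
\[
\E[N_n^*([a,b])]=\frac{1}{\pi}\int_a^b a_n\,\frac{\sqrt{A(a_ns)C(a_ns)-B^2(a_ns)}}{A(a_ns)}\,ds,
\]
where $A,B,C$ denote the diagonal kernels $K_{n+1}(x,x)$, $K_{n+1}^{(0,1)}(x,x)$, $K_{n+1}^{(1,1)}(x,x)$. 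A convenient simplification is the identity $B(x)=\tfrac12 A'(x)$, which already indicates that $B$ is of lower order than $\sqrt{AC}$ in the bulk.

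The conceptual core is a local computation at a point $x_0=a_ns_0$ with $s_0\in(-1,1)$. Set $\rho_n(x)=W^2(x)K_{n+1}(x,x)$, the scaling rate in the universality limit, which satisfies $\int_\R \rho_n=n+1$. The universality limits of Levin and Lubinsky \cite{LL2,LL3} assert that, after rescaling by $1/\rho_n(x_0)$, the normalized weighted kernel tends to the sine kernel $\sin\pi(u-v)/(\pi(u-v))$, uniformly for $s_0$ in compact subsets of $(-1,1)$. Differentiating this limit once in each variable and evaluating on the diagonal, and using that $W$ varies slowly on the scale $1/\rho_n$, I would obtain $B(x_0)/\sqrt{A(x_0)C(x_0)}\to0$ together with
\[
\frac{C(x_0)}{A(x_0)}=\frac{K_{n+1}^{(1,1)}(x_0,x_0)}{K_{n+1}(x_0,x_0)}\sim\frac{\pi^2}{3}\,\rho_n^2(x_0),
\]
the constant $\pi^2/3$ arising as $-f''(0)$ for $f(t)=\sin\pi t/(\pi t)$. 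Consequently the integrand behaves like $\tfrac1\pi a_n\sqrt{C/A}\sim(1/\sqrt3)\,a_n\rho_n(a_ns)$, that is, like $1/\sqrt3$ times the density of zeros expressed in the $s$-variable.

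It then remains to integrate this local relation and identify the limiting measure. Writing $r_n(s)=a_n\rho_n(a_ns)$ for the contracted density, so that $\int_{-1}^1 r_n=n+1$, the Christoffel-function asymptotics for Freud weights give $\tfrac1n r_n(s)\to w(s)$, where $w$ is exactly the Ullman density in $d\mu_w$; this is the equilibrium density of the Freud weight under the Mhaskar--Rakhmanov--Saff scaling $a_n=\gamma_\lambda^{1/\lambda}c^{-1/\lambda}n^{1/\lambda}$, and reflects the known weak convergence of the contracted zero-counting measures (cf. \cite{Pr}). Combining this with the local limit and passing to the limit in the Kac--Rice integral yields
\[
\frac1n\E[N_n^*([a,b])]\to\frac{1}{\sqrt3}\int_a^b w(s)\,ds=\frac{1}{\sqrt3}\,\mu_w([a,b]),
\]
which is \eqref{2.6}.

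The main obstacle is twofold. First, I need a \emph{differentiated} form of universality: the sine-kernel limit must survive one, and in effect two, differentiations in each variable in order to control $C=\sum(p_j')^2$, and the slowly varying weight contributions to $B$ and $C$ must be shown to be genuinely lower order; this demands quantitative, locally uniform kernel estimates rather than the bare pointwise limit. Second, the pointwise local limit of the integrand must be upgraded to convergence of the integral over $[a,b]$, which calls for a dominated-convergence or uniform-bound argument valid on compact subsets of $(-1,1)$. Here the restriction $[a,b]\subset(-1,1)$ is precisely what keeps the analysis in the bulk and away from the soft edge at $\pm1$, where the scaling and the local kernel are of Airy type and the above sine-kernel form of universality no longer holds.
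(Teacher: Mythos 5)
Your overall strategy (Kac--Rice formula for the contracted polynomial plus bulk universality plus identification of the limiting density with the Ullman distribution) is the same as the paper's, but the key local computation contains a genuine error. You claim that $B(x_0)/\sqrt{A(x_0)C(x_0)}\to 0$ and that $C(x_0)/A(x_0)=K_{n+1}^{(1,1)}(x_0,x_0)/K_{n+1}(x_0,x_0)\sim\frac{\pi^2}{3}\rho_n^2(x_0)$, on the grounds that the weight contributions to $B$ and $C$ are lower order. For Freud weights they are not: at $x_0=a_ns_0$ with $s_0\in(-1,1)$ fixed one has $Q'(x_0)=c\lambda\abs{x_0}^{\lambda-1}\mathrm{sgn}(x_0)\asymp n/a_n\asymp\sigma_{n+1}(x_0)\asymp\rho_n(x_0)$, i.e.\ the logarithmic derivative of the weight lives exactly on the critical scale. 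The correct asymptotics (Theorem 1.6 of Levin--Lubinsky, which the paper quotes as \eqref{3.7}) are $W^2B/\sigma_{n+1}^2= Q'/\sigma_{n+1}+o(1)$ and $W^2C/\sigma_{n+1}^3=(Q'/\sigma_{n+1})^2+\frac{\pi^2}{3}+o(1)$, so $B/A\sim Q'$ is of full order, $C/A\sim (Q')^2+\frac{\pi^2}{3}\sigma_{n+1}^2$ carries a same-order $(Q')^2$ term, and the constant $\pi^2/3$ only emerges after the exact cancellation of the $(Q')^2$ contributions in the combination $AC-B^2$. Your ``convenient simplification'' $B=\tfrac12 A'$ actually confirms this: since $A\sim\sigma_{n+1}e^{2Q}$, differentiation produces the leading term $Q'A$, not something negligible. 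So the ``main obstacle'' you pose (showing the weight terms are genuinely lower order) cannot be overcome as stated; what is needed instead is either the full differentiated universality with the $Q'$ terms tracked through $AC-B^2$ (the paper's route), or the observation that $\sqrt{AC-B^2}/A$ is invariant under replacing $p_j$ by $Wp_j$, after which the weighted kernels do satisfy the clean asymptotics you wrote. You invoke neither.

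Two further points the paper supplies that your sketch leaves open. First, to convert the pointwise $o(1)$'s into convergence of the integral one needs the error terms multiplied by $(Q'/\sigma_{n+1})^2$ to stay uniformly small; the paper proves the explicit uniform bound $\abs{Q'(x)/\sigma_{n+1}(x)}\le C$ on $(1-\eps)[-a_{n+1},a_{n+1}]$, which is exactly the dominated-convergence input you acknowledge needing but do not provide. Second, the identification of the limiting density is cleaner than a Christoffel-function limit: by Lemma \ref{lem3.1} the contracted equilibrium density satisfies $\sigma_{n+1}^*(s)\,ds=d\mu_w(s)$ \emph{exactly} for every $n$, so no additional asymptotic is required at that stage. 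Your appeal to differentiating the sine-kernel limit is also not justified as stated --- differentiated universality does not follow formally from the undifferentiated limit --- whereas the paper cites a theorem that asserts the asymptotics of $K_n^{(r,s)}$ directly.
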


Note that $\mu_{w}$ is the weighted equilibrium measure for the weight $w(x)=e^{-\gamma_\lambda \abs{x}^\lambda}$ on $\R$, see \cite{ST} and the next section for details. This measure is often called the Ullman distribution.

Define the normalized zero counting measure $\tau_n=\frac{1}{n}\sum_{k=1}^n \delta_{z_k}$ for the scaled  polynomial $P_n^*(s)$ of \eqref{2.5}, where $\{z_k\}_{k=1}^n$ are its zeros, and $\delta_z$ denotes the unit point mass at $z$. We can determine the weak limit of $\tau_n$ for   random polynomials with quite general random coefficients $\{c_k\}_{k=0}^\infty.$

\begin{theorem} \label{thm2.3}
If the coefficients $\{c_k\}_{k=0}^\infty$ of random orthogonal polynomials \eqref{2.1} are complex i.i.d. random variables such that $\E[|\log|c_0||]<\infty$, then the normalized zero counting measures $\tau_n$ for the scaled  polynomials $P_n^*(s)$ converge weakly to $\mu_w$ with probability one.
\end{theorem}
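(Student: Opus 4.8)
The plan is to recast the convergence of zero counting measures as an $L^1_{loc}$ convergence of logarithmic potentials, and then to use that the distributional Laplacian is weakly continuous. Put $u_n(z)\defeq\frac1n\log\abs{P_n^*(z)}$ for $z\in\C$, and let
\[
V(z)\defeq\int\log\abs{z-t}\,d\mu_w(t)+F_w,
\]
where $F_w$ is the modified Robin (Mhaskar--Saff) constant for the field $Q(x)=\gamma_\lambda\abs{x}^\lambda$; thus $V$ is harmonic on $\C\setminus[-1,1]$ and its Riesz measure is exactly $\mu_w$, see \cite{ST}. Since $\tau_n=\frac1{2\pi}\Delta u_n$ and $\mu_w=\frac1{2\pi}\Delta V$ in the sense of distributions, it suffices to show that, with probability one, $u_n\to V$ in $L^1_{loc}(\C)$; the weak continuity of $\Delta$ then gives $\tau_n\to\mu_w$ weakly. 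The whole problem is thereby reduced to controlling the $n$th root growth of the random polynomial $P_n^*$.

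I would assemble two sets of ingredients. On the deterministic side, writing $P_n^*(s)=\sum_{j=0}^n c_j\,p_j(a_n s)$, the coefficient of $s^n$ is $\ell_n=c_n\gamma_n a_n^n$, so that $u_n(z)=\frac1n\log\abs{\ell_n}+\int\log\abs{z-w}\,d\tau_n(w)$. The classical Freud/Ullman asymptotics give the scaled $n$th root behavior $\frac1n\log\abs{p_n(a_n z)}\to V(z)$ locally uniformly on $\C\setminus[-1,1]$, together with $\frac1n\log\gamma_n+\log a_n\to F_w$, so that the contracted zeros of the single polynomial $p_n$ already equidistribute to $\mu_w$. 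More generally, for $j/n\to\theta\in(0,1]$ one has $\frac1n\log\abs{p_j(a_n z)}\to\theta\,V(\theta^{-1/\lambda}z)$, where the homogeneity $Q(\theta^{-1/\lambda}z)=\theta^{-1}Q(z)$ of the Freud field enters; a direct computation then shows $\theta\,V(\theta^{-1/\lambda}z)\le V(z)$ off $[-1,1]$, with equality only at $\theta=1$, so the top degree term governs the exponential size of the basis. On the probabilistic side, the hypothesis $\E[\abs{\log\abs{c_0}}]<\infty$ gives, via Borel--Cantelli, $\frac1n\log\abs{c_n}\to0$ and $\frac1n\max_{0\le k\le n}\log^+\abs{c_k}\to0$ almost surely; in particular $\frac1n\log\abs{\ell_n}\to F_w$ a.s. The upper bound is then immediate: from $\abs{P_n^*(z)}\le(n+1)\bigl(\max_k\abs{c_k}\bigr)\max_k\abs{p_k(a_n z)}$ together with these a.s.\ facts and the deterministic envelope one gets $\limsup_n u_n(z)\le V(z)$ for every $z\in\C\setminus[-1,1]$ with probability one, indeed locally uniformly there by a normal families argument.

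The lower bound is the crux, and it is here that the sharp hypothesis on the coefficients is truly used. The difficulty is that, unlike the Kac case, the computation above shows there is no exponential gap between the top degree term and its neighbors: the $O(\eps n)$ indices $j$ with $j/n$ near $1$ contribute terms of comparable exponential size $e^{nV(z)+o(n)}$, which could in principle conspire to cancel and drive $\abs{P_n^*(z)}$ far below $e^{nV(z)}$. One cannot rule this out by a second moment argument, since the coefficients need not be square integrable; indeed the upper bound alone does not determine the limiting measure, as the potential domination $U^{\nu}\ge U^{\mu_w}$ together with equal mass does not force $\nu=\mu_w$. I would instead argue for a.e.\ fixed $z$ and then invoke Fubini: for fixed $z\in\C\setminus[-1,1]$, $P_n^*(z)=\sum_j c_j\,p_j(a_n z)$ is a sum of independent random variables with deterministic weights, and one shows that near total cancellation to subexponential precision is a probability zero event, using the independence of the $c_j$ and the a.s.\ nondegeneracy $\frac1n\log\abs{c_n}\to0$; this yields $\liminf_n u_n(z)\ge V(z)$ a.s.\ for each such $z$, hence $u_n(z)\to V(z)$ for a.e.\ $z$ with probability one. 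This is precisely the mechanism furnished by the asymptotic theory of zeros of random polynomials in \cite{Pr}.

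To conclude, I would combine the two bounds. The everywhere-finite upper bound keeps the subharmonic functions $u_n$ locally uniformly bounded above, so the $L^1_{loc}$ compactness theorem for subharmonic functions applies; the pointwise a.e.\ convergence then identifies the limit, giving $u_n\to V$ in $L^1_{loc}(\C)$ almost surely, and the reduction in the first paragraph finishes the proof. The single genuinely hard step is the exclusion of cancellation among the $\Theta(n)$ comparable leading terms, which is exactly why log integrability of the coefficients, rather than merely a.s.\ finiteness and nonvanishing of the leading coefficient, is needed.
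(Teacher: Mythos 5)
Your overall frame (pass to $u_n=\frac1n\log\abs{P_n^*}$, prove $L^1_{loc}$ convergence to the weighted potential, use weak continuity of the Laplacian) is the standard one, and your upper bound $\limsup_n u_n\le V$ together with the coefficient asymptotics $\frac1n\log\abs{c_n}\to 0$ and $\frac1n\max_k\log^+\abs{c_k}\to 0$ (Borel--Cantelli from $\E[\abs{\log\abs{c_0}}]<\infty$, which is Lemma 4.2 of \cite{Pr}) is sound. The gap is at the step you yourself identify as the crux. You propose to obtain the lower bound $\liminf_n u_n(z)\ge V(z)$ by arguing that ``near total cancellation to subexponential precision is a probability zero event, using the independence of the $c_j$.'' No such anti-concentration argument is available under the stated hypothesis: $\E[\abs{\log\abs{c_0}}]<\infty$ permits the $c_j$ to be almost surely equal to a fixed nonzero constant, in which case there is no randomness left to exploit, yet the theorem still holds. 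The reference \cite{Pr} supplies only the $n$th-root coefficient asymptotics above, not a small-ball estimate for $\sum_j c_j p_j(a_nz)$ at a fixed point; so your citation does not fill the hole, and your closing diagnosis of why log-integrability is needed (to exclude cancellation) is not the actual role it plays.

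The cancellation problem is in fact sidestepped entirely by orthogonality, and this is the idea your proposal is missing. Parseval gives the exact, cancellation-free identity $\int_{-\infty}^\infty\abs{P_n}^2W^2=\sum_{k=0}^n\abs{c_k}^2$, hence $\max_k\abs{c_k}\le\|P_nW\|_{L^2}\le (n+1)\max_k\abs{c_k}$; the Nikolskii-type inequalities of \cite{MS} convert this to $\|P_nW\|_{L^\infty(\R)}^{1/n}\to 1$ a.s., and together with the leading coefficient asymptotics for $\gamma_n$ and $\abs{c_n}^{1/n}\to1$ this shows that the monic polynomials $Q_n^*=P_n^*/(c_n\gamma_na_n^n)$ are asymptotically extremal, i.e. $\|w^nQ_n^*\|_\R^{1/n}\to e^{-F_w}$. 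Theorem III.4.2 of \cite{ST} then yields the weak convergence of the zero counting measures by a purely deterministic potential-theoretic argument (essentially your reduction plus the principle of domination applied to the norm bound on $S_w$, using that $[-1,1]$ has empty planar interior and connected complement); no pointwise lower bound on $\abs{P_n^*(z)}$ off the support, and hence no anti-concentration, is ever required. The correct role of $\E[\abs{\log\abs{c_0}}]<\infty$ is exactly to guarantee that $\abs{c_n}$ is not exponentially small and $\max_k\abs{c_k}$ is not exponentially large, so that the monic normalization and the norm estimate survive taking $n$th roots. To repair your proof you would either need to import this norm-based extremality argument (at which point you have reproduced the paper's proof), or add a genuine non-degeneracy hypothesis on the law of $c_0$ and prove a small-ball estimate, which would prove a strictly weaker theorem.
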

Closely related results on the asymptotic zeros distribution of random orthogonal polynomials with varying weights were proved by Bloom \cite{Bl} and Bloom and Levenberg \cite{BL}, but they are not directly applicable to our case because of different normalization. Theorem \ref{thm2.3} allows to find asymptotics for the expected number of zeros in various sets. In particular, we need the following corollary for the proof of Theorem \ref{thm2.1}.

\begin{corollary} \label{cor2.4}
Suppose that the coefficients $\{c_k\}_{k=0}^\infty$ of random orthogonal polynomials \eqref{2.1} are complex i.i.d. random variables such that $\E[|\log|c_0||]<\infty$. If $E\subset\C$ is any compact set satisfying $\mu_{w}(\partial E)=0,$ then
\begin{equation} \label{2.7}
\lim_{n\rightarrow\infty} \frac{1}{n}\E\left[N_n^*(E)\right] = \mu_{w}(E),
\end{equation}
where $N_n^*(E)$ is the number of real zeros of $P_n^*(s)$ in $E$.
\end{corollary}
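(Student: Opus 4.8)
The plan is to read off the result from Theorem~\ref{thm2.3} by passing from almost sure convergence of the random counting measures $\tau_n$ to convergence of their expected mass on $E$. The starting observation is the identity $\frac{1}{n}N_n^*(E)=\tau_n(E)$, valid because $\tau_n=\frac{1}{n}\sum_{k=1}^n\delta_{z_k}$ assigns mass $1/n$ to each of the $n$ zeros $z_k$ of the scaled polynomial $P_n^*$ from \eqref{2.5} (counted with multiplicity). Here $P_n^*$ has degree exactly $n$ with probability one: the hypothesis $\E[|\log|c_0||]<\infty$ forces $\P(c_0=0)=0$, so the leading coefficient is almost surely nonzero and each $\tau_n$ is genuinely a probability measure on $\C$. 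Consequently \eqref{2.7} is equivalent to the assertion $\E[\tau_n(E)]\to\mu_w(E)$.

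First I would apply Theorem~\ref{thm2.3}, which provides an event of full probability on which $\tau_n\to\mu_w$ weakly. On this event, since $E$ is compact with $\mu_w(\partial E)=0$, the Portmanteau theorem for weakly convergent probability measures on $\C$ yields $\tau_n(E)\to\mu_w(E)$; the continuity-set hypothesis $\mu_w(\partial E)=0$ is precisely what licenses evaluating the weak limit on the (in general neither open nor closed) set $E$. Thus $\tau_n(E)\to\mu_w(E)$ almost surely.

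Next I would upgrade this almost sure convergence to convergence in mean. The crucial feature is the uniform bound $0\le\tau_n(E)\le\tau_n(\C)=1$, so that the constant $1$ dominates the sequence and the bounded convergence theorem gives $\E[\tau_n(E)]\to\mu_w(E)$, which is exactly \eqref{2.7}. The argument carries no genuine analytic difficulty, since all the substance resides in Theorem~\ref{thm2.3}; the only points demanding care are verifying that $\tau_n$ is almost surely a probability measure on $\C$ (so that no mass escapes to infinity and Portmanteau applies) and justifying the interchange of limit and expectation. The latter rests solely on the trivial bound $\tau_n(E)\le 1$ rather than on any integrability estimate for the locations of the zeros, which is why the operative hypotheses are the compactness of $E$ and the condition $\mu_w(\partial E)=0$, and not the tail behaviour of the coefficient distribution.
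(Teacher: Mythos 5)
Your proposal is correct and follows essentially the same route as the paper: both pass from the almost sure weak convergence $\tau_n\to\mu_w$ of Theorem~\ref{thm2.3} to $\tau_n(E)\to\mu_w(E)$ via the continuity-set (Portmanteau) criterion using $\mu_w(\partial E)=0$, and then interchange limit and expectation by bounded convergence using $\tau_n(E)\le 1$. Your added remark that $\E[|\log|c_0||]<\infty$ forces the leading coefficient to be almost surely nonzero, so that $\tau_n$ is genuinely a probability measure, is a worthwhile detail the paper leaves implicit.
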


It is of interest to develop similar results for a more general class of weights $W$ on the whole real line, defining orthogonal polynomials $p_k$. Another direction of further work is related to relaxing conditions on random coefficients $c_k$, e.g., by considering probability distributions from the domain of attraction of normal law as in \cite{IM1,IM2}.

\section{Proofs}

Our proofs require detailed knowledge of potential theory with external fields generated by Freud weights, see \cite{LL1} and \cite{ST}.

Let $W$ be a continuous nonnegative weight function on $\R$ such that $W$ is not identically zero and $\lim_{|x|\to\infty} |x|\,W(x) = 0.$ Set $Q(x):= - \log W(x).$ The weighted equilibrium measure $\mu _W$ of $\R$  is the unique probability measure with compact support $S_W=\text{ supp } \mu_W \subset \R$ that minimizes the energy functional
\begin{equation*}
I[\mu] = - \iint \log |z-t|\, d\mu(t) d\mu(z)+2\int Q\, d\mu
\end{equation*}
amongst all probability measures $\mu$ with support on $\R$. It satisfies
\[
\int \log \frac{1}{|z-t|}\, d\mu_W(t) +Q(z)=C,\quad z \in S_W,
\]
and
\[
\int \log \frac{1}{|z-t|}\, d\mu_W(t) +Q(z)\geq C,\quad z \in \R,
\]
where $C$ is a constant. We consider the general Freud weights $W(x)=e^{-c\abs{x}^\lambda}$, where $c>0$ and $\lambda>1$ are constants, and the normalized weight $w(x)=e^{-\gamma_\lambda \abs{x}^\lambda}$, where $\gamma_\lambda$ is defined by \eqref{2.4}. In the latter case, the weighted equilibrium measure $\mu_{w}$ is given by
\[
d\mu_{w}(s)=\left( \frac{\lambda}{\pi}\int_{\abs{s}}^1 \frac{y^{\lambda-1}}{\sqrt{y^2-s^2}}\, dy  \right)ds,\  s\in [-1, 1],
\]
by Theorem 5.1 of \cite[p. 240]{ST}.

For a weight function $W(x)=e^{-Q(x)}$, the Mhaskar-Rakhmanov-Saff numbers
\[
a_{-n}<0<a_n
\]
are defined for $n \geq 1$ by the relations
\[
n=\frac{1}{\pi}\int_{a_{-n}}^{a_n}\frac{xQ'(x)}{\sqrt{(x-a_{-n})(a_n-x)}} \, dx
\]
and
\[
0=\frac{1}{\pi}\int_{a_{-n}}^{a_n}\frac{Q'(x)}{\sqrt{(x-a_{-n})(a_n-x)}} \, dx.
\]
Since $Q(x)=c\abs{x}^\lambda$ is even, we have $a_{-n}=-a_n$. Existence and uniqueness of these numbers is established in the monographs \cite{LL1}, \cite{Mh}, \cite{ST}, but goes back to earlier work of Mhaskar, Saff, and Rakhmanov. One illustration of their role is the Mhaskar-Saff identity:
\[
||PW||_{L_\infty (\R)}=||PW||_{L_\infty ([-a_n, a_n])},
\]
which is valid for all polynomials $P$ of degree at most $n$. It is known that the Mhaskar-Rakhmanov-Saff number associated with the Freud weight $W(x)=e^{-c\abs{x}^\lambda}$ is given by
\[
a_n=\gamma_\lambda^{\frac{1}{\lambda}}c^{-\frac{1}{\lambda}}n^{\frac{1}{\lambda}}.
\]
See page 308 of \cite{ST} for further details. We define the Mhaskar-Rakhmanov-Saff interval $\Delta_n$ as $\Delta_n :=[-a_n, a_n]$. The linear transformation
\[
L_n(x)=\frac{x}{a_n},\ x\in \R,
\]
maps $\Delta_n$ onto $[-1, 1]$. Its inverse is
\[
L_n^{[-1]}(s)=a_ns,\ s\in \R.
\]
For $\varepsilon\in(0,1)$, we let
\[
J_n(\varepsilon)=L_n^{[-1]}[-1+\varepsilon, 1-\varepsilon]=(1-\varepsilon)[-a_n, a_n].
\]
Then the equilibrium density is defined as
\[
\sigma_n(x)=\displaystyle \frac{\sqrt{(a_n+x)(a_n-x)}}{\pi^2}\int_{-a_n}^{a_n} \frac{Q'(s)-Q'(x)}{s-x}\frac{ds}{\sqrt{(a_n+s)(a_n-s)}}, \, x\in \Delta_n.
\]
The equilibrium density satisfies \cite[p. 41]{LL1}:
\[
\displaystyle \int_{-a_n}^{a_n} \log \frac{1}{\abs{x-s}}\sigma_n(s)\, ds + Q(x)=C, \, x\in \Delta_n,
\]
and
\[
\displaystyle \int_{-a_n}^{a_n} \log \frac{1}{\abs{x-s}}\sigma_n(s)\, ds + Q(x)\geq C, \, x\in \R.
\]
Note that the measure $\sigma_n(x)dx$ has total mass $n$:
\[
\displaystyle \int_{-a_n}^{a_n} \sigma_n(x)\, dx=n.
\]
We also define the normalized version of $\sigma_n$ as follows:
\begin{align*}
\sigma_n^*(s) := \frac{a_n}{n}\sigma_n(a_ns),\quad s\in [-1,1].
\end{align*}
Note that
\[
\int_{-1}^1\sigma_n^*(s) \, ds=1.
\]
For details on $\sigma_n$, one should consult the book \cite{LL1} by Levin and Lubinsky.

\begin{lemma} \label{lem3.1}
For a Freud weight $W(x)=e^{-c\abs{x}^\lambda}$, where $c>0$ and $\lambda>1$ are constants, the normalized equilibrium density satisfies
\[
\sigma_{n}^*(s)\, ds = d\mu_{w}(s) \text{ for all } n \in \N,\quad s\in [-1, 1].
\]
That is,
\[
\sigma_{n}^*(s)=\frac{\lambda}{\pi}\int_{\abs{s}}^1 \frac{y^{\lambda-1}}{\sqrt{y^2-s^2}}\, dy \text{ for all } n\in \N,\quad s\in [-1, 1].
\]
\end{lemma}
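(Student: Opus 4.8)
The plan is to show that the pushforward of the normalized equilibrium density under the contraction $x\mapsto x/a_n$ is exactly the weighted equilibrium measure $\mu_w$ of the normalized weight $w(x)=e^{-\gamma_\lambda\abs{x}^\lambda}$, and then to read off the stated formula from the explicit density of $\mu_w$ furnished by Theorem 5.1 of \cite{ST}. The mechanism is a change of variables in the variational (Frostman) conditions for $\sigma_n$ recorded above, combined with uniqueness of the equilibrium measure. The payoff of this route is that the emphasized ``for all $n$'' becomes transparent: after rescaling, every trace of $n$ disappears from the problem.

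First I would substitute $x=a_n\xi$ and $s=a_n\tau$ in the equilibrium identity $\int_{-a_n}^{a_n}\log\frac{1}{\abs{x-s}}\sigma_n(s)\,ds+Q(x)=C$. Since $\sigma_n^*(\tau)=\frac{a_n}{n}\sigma_n(a_n\tau)$ gives $\sigma_n(s)\,ds=n\,\sigma_n^*(\tau)\,d\tau$ together with $\int_{-1}^1\sigma_n^*=1$, the logarithmic term becomes $n\int_{-1}^1\log\frac{1}{\abs{\xi-\tau}}\sigma_n^*(\tau)\,d\tau-n\log a_n$. The crucial cancellation comes from the external field: using $a_n^\lambda=\gamma_\lambda c^{-1}n$ one gets $Q(a_n\xi)=c\,a_n^\lambda\abs{\xi}^\lambda=\gamma_\lambda n\abs{\xi}^\lambda$. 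Dividing the whole identity by $n$ therefore yields
\[
\int_{-1}^1\log\frac{1}{\abs{\xi-\tau}}\,\sigma_n^*(\tau)\,d\tau+\gamma_\lambda\abs{\xi}^\lambda=\frac{C}{n}+\log a_n,\qquad \xi\in[-1,1],
\]
and the same substitution turns the companion inequality into the corresponding $\geq$ statement for all $\xi\in\R$. The right-hand side is a constant, and all $n$-dependence has vanished from the left-hand side.

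These are precisely the characterizing conditions for the weighted equilibrium problem with external field $Q_w=\gamma_\lambda\abs{\cdot}^\lambda$, i.e. for the weight $w$. Since $\sigma_n^*(\tau)\,d\tau$ is a probability measure supported in $[-1,1]$, the support of $\mu_w$, and since it satisfies the equilibrium equality on the set $[-1,1]$ containing its support and the inequality on all of $\R$, uniqueness of the weighted equilibrium measure forces $\sigma_n^*(\tau)\,d\tau=d\mu_w(\tau)$ for every $n$. Substituting the explicit density of $\mu_w$ from Theorem 5.1 of \cite{ST} then gives the stated formula, manifestly independent of $n$.

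The main obstacle I anticipate is the potential-theoretic uniqueness step: one must invoke the Frostman-type theorem guaranteeing that a compactly supported probability measure satisfying the equilibrium equality (quasi-)everywhere on its support and the inequality everywhere on $\R$ must coincide with $\mu_w$ (see \cite{ST}). Provided one checks that $\operatorname{supp}\sigma_n\subseteq\Delta_n$ so that $\operatorname{supp}\sigma_n^*\subseteq[-1,1]$, and that the equality above indeed holds on all of $[-1,1]$, this is routine. A purely computational alternative is to substitute $Q'(a_nt)=c\lambda a_n^{\lambda-1}\operatorname{sgn}(t)\abs{t}^{\lambda-1}$ directly into the displayed formula for $\sigma_n$, which after the identical rescaling reduces $\sigma_n^*(u)$ to $\frac{\lambda\gamma_\lambda\sqrt{1-u^2}}{\pi^2}\int_{-1}^1\frac{\operatorname{sgn}(t)\abs{t}^{\lambda-1}-\operatorname{sgn}(u)\abs{u}^{\lambda-1}}{t-u}\frac{dt}{\sqrt{1-t^2}}$; there the obstacle merely shifts to evaluating this finite Hilbert transform and matching it against the Ullman density using the value of $\gamma_\lambda$, an effort I would prefer to bypass via the uniqueness route.
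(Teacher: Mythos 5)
Your proposal is correct and follows essentially the same route as the paper's own proof: rescale the Frostman equality and inequality for $\sigma_n$ via $x=a_ns$, observe that the external field becomes $\gamma_\lambda\abs{s}^\lambda$ and the constant absorbs $\log a_n$, invoke the uniqueness characterization of the weighted equilibrium measure (Theorem 3.1 of \cite{ST}), and read off the density from Theorem 5.1 of \cite{ST}. Your write-up simply makes explicit the bookkeeping (the $-n\log a_n$ term and the cancellation $c\,a_n^\lambda=\gamma_\lambda n$) that the paper leaves to the reader.
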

\begin{proof}
Recall that $\sigma_n$ satisfies
\[
\displaystyle \int_{-a_n}^{a_n} \log \frac{1}{\abs{x-y}}\sigma_n(y)\, dy + c\abs{x}^\lambda=C_1, \quad x\in [-a_n, a_n],
\]
and
\[
\displaystyle \int_{-a_n}^{a_n} \log \frac{1}{\abs{x-y}}\sigma_n(y)\, dy + c\abs{x}^\lambda \geq C_1, \quad x\in \R,
\]
where $a_n=\gamma_\lambda^{\frac{1}{\lambda}}c^{-\frac{1}{\lambda}}n^{\frac{1}{\lambda}}$.
The changes of variables $x=a_n s$ and $y=a_n t$ reduce the above relations to
\[
\displaystyle \int_{-1}^{1} \log \frac{1}{\abs{s-t}}\sigma_n^*(t)\, dt + \gamma_{\lambda}\abs{s}^\lambda=C_2, \ s\in [-1, 1],
\]
and
\[
\displaystyle \int_{-1}^{1} \log \frac{1}{\abs{s-t}}\sigma_n^*(t)\, dt + \gamma_{\lambda}\abs{s}^\lambda\geq C_2,\ s\in \R.
\]
Invoking Theorem 3.1 of \cite[p. 43]{ST}, we deduce that
\[
\sigma_{n}^*(s)\, ds = d\mu_{w}(s) \text{ for all } n \in \N,\ s\in [-1, 1].
\]
On the other hand, recalling that
\[
d\mu_{w}(s)=\left( \frac{\lambda}{\pi}\int_{\abs{s}}^1 \frac{y^{\lambda-1}}{\sqrt{y^2-s^2}}\, dy  \right)ds,\  s\in [-1, 1],
\]
we obtain that
\[
\sigma_{n}^*(s)=\frac{\lambda}{\pi}\int_{\abs{s}}^1 \frac{y^{\lambda-1}}{\sqrt{y^2-s^2}}\, dy \text{ for all } n\in \N,\ s\in [-1, 1].
\]
\end{proof}

\begin{proof}[Proof of Theorem \ref{thm2.2}]
In this case, $W(x)=e^{-c\abs{x}^\lambda}, x\in \R$, where $c>0$ and $\lambda>1$ are constants. The strategy is to apply Theorem 1.6 of \cite{LL2}. It states that for all $r,s\geq 0$ and any $\varepsilon\in(0,1)$, we have
\begin{equation} \label{3.7}
\frac{W^2(x)K_n^{(r,s)}(x,x)}{(\sigma_n(x))^{r+s+1}}=\sum_{j=0}^r \left( \begin{array}{c} r \\ j \end{array}  \right) \sum_{k=0}^s \left( \begin{array}{c} s \\ k \end{array}  \right)\tau_{j,k}\pi^{j+k}\left(\frac{Q'(x)}{\sigma_n(x)}\right)^{r+s-j-k}+o(1)\quad\mbox{as } n\to\infty,
\end{equation}
uniformly for $x \in J_n(\varepsilon)$, where
\begin{equation*}
\tau _{j,k}=\left\{
\begin{tabular}{ll}
$0,$ & $j+k$ odd, \\
$(-1)^{(j-k)/2}\frac{1}{j+k+1},$ & $j+k$ even.
\end{tabular}
\right.
\end{equation*}
That is, uniformly in $x \in J_{n+1}(\varepsilon)$,
\[
\frac{W^2(x)K_{n+1}^{\left(0,0\right)}\left(x,x\right)}{\sigma_{n+1}(x)}=1+o(1)\quad\mbox{as } n\to\infty,
\]
\[
\frac{W^2(x)K_{n+1}^{\left(0,1\right)}\left(x,x\right)}{(\sigma_{n+1}(x))^{2}}=\frac{Q'(x)}{\sigma_{n+1}(x)}+o(1)\quad\mbox{as } n\to\infty,
\]
and
\[
\frac{W^2(x)K_{n+1}^{\left(1,1\right)}\left(x,x\right)}{(\sigma_{n+1}(x))^{3}}=\left(\frac{Q'(x)}{\sigma_{n+1}(x)}\right)^2+\frac{\pi^2}{3}+o(1)\quad\mbox{as } n\to\infty.
\]
Applying \eqref{1.2} with \eqref{2.2}, we obtain that
\begin{equation} \label{3.8}
\frac{1}{n} \E\left[N_{n}\left([l, q]\right)\right] = \frac{1}{\pi n}\int_l^q \sqrt{\frac{K_{n+1}^{\left( 1,1\right) }\left( x,x\right) }{K_{n+1}^{\left( 0,0\right) }\left( x,x\right)}-\left( \frac{K_{n+1}^{\left(0,1\right) }\left(x,x\right)}{K_{n+1}^{\left( 0,0\right) }\left( x,x\right) }\right) ^{2}} dx
\end{equation}
for any closed interval $[l, q] \subset J_{n+1}(\varepsilon)$ ($l, q$ may depend on $n$). Now, uniformly for $x \in J_{n+1}(\varepsilon)$,
\[
\frac{K_{n+1}^{\left( 1,1\right) }\left( x,x\right) }{K_{n+1}^{\left( 0,0\right) }\left( x,x\right)}= \sigma_{n+1}^2(x) \left( \left(\frac{Q'(x)}{\sigma_{n+1}(x)}\right)^2+\frac{\pi^2}{3}+o(1)\right)(1+o(1))^{-1}
\]
and
\begin{align*}
\left( \frac{K_{n+1}^{\left(0,1\right) }\left(x,x\right)}{K_{n+1}^{\left( 0,0\right) }\left( x,x\right) }\right) ^{2}&=\sigma_{n+1}^2(x)\left(\frac{Q'(x)}{\sigma_{n+1}(x)}+o(1)\right)^2(1+o(1))^{-2}\\
&=\sigma_{n+1}^2(x)\left( \left(\frac{Q'(x)}{\sigma_{n+1}(x)}\right)^2 + \frac{Q'(x)}{\sigma_{n+1}(x)}o(1)+o(1)\right)(1+o(1))^{-1}.
\end{align*}
We thus obtain that
\begin{align*}
&\frac{1}{n}\E\left[ N_{n}([l, q])\right]  \\
&=\frac{1}{\pi } \int_l^q \frac{1}{n}\sqrt{\sigma_{n+1}^2(x) \frac{\left(\frac{Q'(x)}{\sigma_{n+1}(x)}\right)^2+\frac{\pi^2}{3}+o(1)}{1+o(1)}- \sigma_{n+1}^2(x)\frac{\left(\frac{Q'(x)}{\sigma_{n+1}(x)}\right)^2+\frac{Q'(x)}{\sigma_{n+1}(x)}o(1)+o(1)}{1+o(1)} }\,dx \\
&=\frac{1}{\pi } \int_l^q \frac{\sigma_{n+1}(x)}{n} \sqrt{\frac{\pi^2}{3}+\left(\frac{Q'(x)}{\sigma_{n+1}(x)}\right)^2 o(1) + \frac{Q'(x)}{\sigma_{n+1}(x)}o(1)+o(1)}\, dx \quad\mbox{as } n\to\infty.
\end{align*}
Note that the number $N_{n}(E)$ of real zeros of $P_n(x)$ in $E$ equals the number $N_{n}^*(E^*)$ of real zeros of $P_n^*(s)$ in $E^*:=E/a_{n+1}$, since $L_{n+1}$ is a bijection. Since $[a,b] \subset (-1, 1)$ is a closed interval, we have that $[a_{n+1}a,a_{n+1}b] \subset J_{n+1}(\varepsilon)=(1-\varepsilon)[-a_{n+1},a_{n+1}]$ provided $\max\{\abs{a}, \abs{b}\}\leq 1-\varepsilon$ for some constant $\varepsilon \in (0, 1)$. Hence
\begin{align} \label{3.9}
\frac{1}{n}\E\left[ N_{n}^*([a,b])\right]&=\frac{1}{n}\E\left[ N_{n}([a_{n+1}a,a_{n+1}b])\right] \\
&=\frac{1}{\pi} \int_{a_{n+1}a}^{a_{n+1}b} \frac{\sigma_{n+1}(x)}{n} \sqrt{  \frac{\pi^2}{3}+\left(\frac{Q'(x)}{\sigma_{n+1}(x)}\right)^2 o(1) + \frac{Q'(x)}{\sigma_{n+1}(x)}o(1)+o(1)}\, dx \nonumber\\
&=\frac{n+1}{\pi n} \int_a^b \sigma_{n+1}^*(s)\, \sqrt{ \frac{\pi^2}{3} + \left(\left(\frac{Q'(a_{n+1}s)}{\sigma_{n+1}(a_{n+1}s)}\right)^2 + \frac{Q'(a_{n+1}s)}{\sigma_{n+1}(a_{n+1}s)}+1\right)o(1)}\, ds. \nonumber
\end{align}
We show that $\abs{Q'(x)/\sigma_{n+1}(x)}\leq C$ on $x\in[a_{n+1}a, a_{n+1}b]$ for some constant $C>0$ independent of $n$. Recall that $[a_{n+1}a, a_{n+1}b] \subset J_{n+1}(\varepsilon)=(1-\varepsilon)[-a_{n+1},a_{n+1}]$, for some $\varepsilon \in (0, 1)$. It is clear that $Q'(x)/\sigma_{n+1}(x)$ is an odd function of $x \in [a_{n+1},0)\cup (0, a_{n+1}]$, so that we only need to consider the interval $(0, a_{n+1}(1-\varepsilon)]$. First note that for $x\in (0, a_{n+1})$,
\[
\frac{Q'(x)}{\sigma_{n+1}(x)}=c^{1-1/\lambda}\gamma_\lambda^{1/\lambda}\pi\frac{x^{\lambda-1}}{(n+1)^{1-1/\lambda}}\left( \int_{x/a_{n+1}}^1 \frac{y^{\lambda-1}}{\sqrt{y^2-x^2/a_{n+1}^2}}\, dy\right)^{-1}.
\]
Since $\lambda>1$, we can estimate the integral in the above formula for $x \in (0, a_{n+1}(1-\varepsilon)]$ as follows:
\[
\int_{x/a_{n+1}}^1 \frac{y^{\lambda-1}}{\sqrt{y^2-x^2/a_{n+1}^2}}\, dy \geq \int_{x/a_{n+1}}^1 \frac{y^{\lambda-1}}{\sqrt{y^2}}\, dy=\frac{1-(x/a_{n+1})^{\lambda-1}}{\lambda-1}.
\]
Using this estimate, we see that for $x \in (0, a_{n+1}(1-\varepsilon)]$,
\begin{align*}
\abs{\frac{Q'(x)}{\sigma_{n+1}(x)}} &\leq c^{1-1/\lambda}\gamma_\lambda^{1/\lambda}\pi\frac{x^{\lambda-1}}{(n+1)^{1-1/\lambda}}\frac{\lambda-1}{1-(x/a_{n+1})^{\lambda-1}}\\
&\leq c^{1-1/\lambda}\gamma_\lambda^{1/\lambda}\pi\frac{(a_{n+1}(1-\varepsilon))^{\lambda-1}}{(n+1)^{1-1/\lambda}}\frac{\lambda-1}{1-(1-\varepsilon)^{\lambda-1}}\\
&=\frac{\gamma_\lambda\pi(\lambda-1)(1-\varepsilon)^{\lambda-1}}{1-(1-\varepsilon)^{\lambda-1}}=:C.
\end{align*}
Applying \eqref{3.9} and Lemma \ref{lem3.1}, we obtain that
\begin{align*}
\frac{1}{n}\E\left[ N_{n}^*([a,b])\right]
&=\frac{1}{\pi }\left(1+\frac{1}{n}\right) \int_a^b \sigma_{n+1}^*(s)\, \sqrt{\frac{\pi^2}{3}+o(1)}\, ds\\
&=\frac{1+o(1)}{\sqrt{3} } \int_a^b  \sigma_{n+1}^*(s) \, ds\\
&=\frac{1+o(1)}{\sqrt{3} } \int_a^b d\mu_{w} (s).
\end{align*}
To complete the proof, we pass to the limit as $n\rightarrow\infty$:
\[
\lim_{n\rightarrow\infty} \frac{1}{n}\E\left[N_n^*\left([a,b]\right)\right] = \frac{1}{\sqrt{3}} \int_a^b \, d\mu_{w} (s)=\frac{1}{\sqrt{3}}\mu_{w}\left([a,b]\right).
\]
\end{proof}

\begin{proof}[Proof of Theorem \ref{thm2.3}]
Following \cite{ST}, we call a sequence of monic polynomials $\{Q_n\}_{n=1}^\infty$, with $ \deg(Q_n)=n,$ asymptotically extremal with respect to the weight $w$ if it satisfies
\[
\lim_{n\to\infty} \|w^n Q_n\|_\R^{1/n} = e^{-F_w},
\]
where $\|\cdot\|_\R$ is the supremum norm on $\R$ and $F_w=\log{2}+1/\lambda$ is the modified Robin constant corresponding to $w$, see \cite[p. 240]{ST}. Theorem 4.2 of \cite[p. 170]{ST} states that any sequence of such asymptotically extremal monic polynomials have their zeros distributed according to the measure $\mu_w$. Namely, the normalized zero counting measures of $Q_n$ converge weakly to $\mu_w$. We show that the monic polynomials
\[
Q_n^*(x) := P_n^*(x)/(c_n\gamma_n a_n^n),\quad n\in\N,
\]
are asymptotically extremal in this sense with probability one, so that the result of Theorem \ref{thm2.3} follows.

Using orthogonality, we obtain for polynomials defined in \eqref{2.1} that
\[
\int_{-\infty}^\infty |P_n(x)|^2 W^2(x)\,dx = \sum_{k=0}^n |c_k|^2.
\]
Hence
\[
\max_{0\le k\le n} |c_k| \le \left(\int_{-\infty}^\infty |P_n(x)|^2 W^2(x)\,dx \right)^{1/2} \le (n+1) \max_{0\le k\le n} |c_k|.
\]
Lemma 4.2 of \cite{Pr} (see (4.6) there) implies that
\[
\lim_{n\to\infty} \left(\int_{-\infty}^\infty |P_n(x)|^2 W^2(x)\,dx \right)^{1/(2n)} = \lim_{n\to\infty} \left( \max_{0\le k\le n} |c_k| \right)^{1/n} = 1
\]
with probability one. Applying the Nikolskii-type inequalities of Theorem 6.1 and Theorem 6.4 from \cite{MS}, we obtain that the same holds for the supremum norm:
\[
\lim_{n\to\infty} \left\|w^n P_n^* \right\|_\R^{1/n} = \lim_{n\to\infty} \left\|P_n W \right\|_\R^{1/n} = 1
\]
with probability one. Recall that the leading coefficients of orthonormal polynomials $p_n$ satisfy
\[
\lim_{n\to\infty} \gamma_n^{1/n}\,n^{1/\lambda} = 2 c^{1/\lambda} \gamma_\lambda^{-1/\lambda} e^{1/\lambda}
\]
by Theorem 1.2 of \cite[p. 362]{ST}. We also use below that $\lim_{n\to\infty} |c_n|^{1/n} = 1$ with probability one by Lemma 4.2 of \cite{Pr}. It follows that
\begin{align*}
\lim_{n\to\infty} \left\|w^n Q_n^* \right\|_\R^{1/n} &= \lim_{n\to\infty} \left\|w^n P_n^* \right\|_\R^{1/n} \lim_{n\to\infty} |c_n \gamma_n|^{-1/n}\,|a_n|^{-1} = \lim_{n\to\infty} \left(\gamma_n^{1/n} n^{1/\lambda} c^{-1/\lambda} \gamma_\lambda^{1/\lambda} \right)^{-1} \\ &= \left( 2 c^{1/\lambda} \gamma_\lambda^{-1/\lambda} e^{1/\lambda} c^{-1/\lambda} \gamma_\lambda^{1/\lambda} \right)^{-1} = e^{-(\log{2}+1/\lambda)} = e^{-F_w}.
\end{align*}

\end{proof}

\begin{proof}[Proof of Corollary \ref{cor2.4}]
Consider the normalized zero counting measure $\tau_n=\frac{1}{n}\sum_{k=1}^n \delta_{z_k}$ for the scaled  polynomial $P_n^*(s)$ of \eqref{2.4}, where $\{z_k\}_{k=1}^n$ are the zeros of that polynomial, and $\delta_z$ denotes the unit point mass at $z$. Theorem \ref{thm2.3} implies that measures $\tau_n$ converge weakly to $\mu_w$ with probability one. Since $\mu_w(\partial E)=0,$ we obtain that $\tau_n\vert_E$ converges weakly to $\mu_w\vert_E$ with probability one by Theorem $0.5^\prime$ of \cite{La} and Theorem 2.1 of \cite{Bi}. In particular, we have that the random variables $\tau_n(E)$ converge to $\mu_w(E)$ with probability one. Hence this convergence holds in $L^p$ sense by the Dominated Convergence Theorem, as $\tau_n(E)$ are uniformly bounded by 1, see Chapter 5 of \cite{Gut}. It follows that
\[
\lim_{n\to\infty} \E[|\tau_n(E) - \mu_w(E)|] = 0
\]
for any compact set $E$ such that $\mu_w(\partial E)=0,$ and
\[
\left|\E[\tau_n(E) - \mu_w(E)]\right| \le \E[|\tau_n(E) - \mu_w(E)|] \to 0 \quad\text{as } n\to\infty.
\]
But $\E[\tau_n(E)]=\E[N_n^*(E)]/n$ and $\E[\mu_w(E)]=\mu_w(E),$ which immediately gives \eqref{2.7}.
\end{proof}

\begin{proof}[Proof of Theorem \ref{thm2.1}]
Theorem \ref{thm2.2} gives that
\[
\lim_{n\rightarrow\infty} \frac{1}{n}\E\left[N_n^*\left([a,b]\right)\right] = \frac{1}{\sqrt{3}} \mu_{w}([a,b])
\]
for any interval $[a,b]\subset (-1,1)$. Note that both $\E\left[N_n^*\left(H\right)\right]$ and $\mu_{w}(H)$ are additive functions of the set $H$. Moreover, they both vanish when $H$ is a single point by \eqref{2.7} and the absolute continuity of $\mu_{w}$ with respect to Lebesgue measure on $S_{w}=[-1,1]$.
Hence \eqref{2.7} gives that
\[
\lim_{n\rightarrow\infty} \frac{1}{n}\E\left[N_n^*\left(\R\setminus (-1,1)\right)\right] = \mu_{w}(\R\setminus(-1,1)) = 0.
\]
It now follows that
\[
\lim_{n\to\infty} \frac{1}{n} \E[N_n^*(\R)] = \frac{1}{\sqrt{3}} \mu_{w}((-1,1)) = \frac{1}{\sqrt{3}}.
\]
To complete the proof, observe that $N_n^*(\R)=N_n(\R)$, so that $\E[N_n^*(\R)]=\E[N_n(\R)]$, since $L_{n+1}$ is a bijection for each fixed $n$.
Therefore (\ref{2.3}) is proved.
\end{proof}

\section*{Acknowledgements}

Research of the first author was partially supported by the National Security Agency (grant H98230-12-1-0227). Work of the second author is done towards completion of her Ph.D. degree at Oklahoma State University under the direction of the first author.

\bibliographystyle{elsarticle-num}

\end{document}